\newtheorem{theorem}{Theorem}[section]
\newtheorem{lemma}[theorem]{Lemma}
\newtheorem{proposition}[theorem]{Proposition}
\newtheorem{corollary}[theorem]{Corollary}
\theoremstyle{definition}
\newtheorem{definition}[theorem]{Definition}
\theoremstyle{remark}
\newtheorem{remark}[theorem]{Remark}
\newtheorem{notation}[theorem]{Notations}
\newtheorem{claim}[theorem]{Claim}
\numberwithin{equation}{section}
\newcommand{\T}{\mathrm{T}}
\newcommand{\R}{\mathbb{R}}
\newcommand{\x}{\mathbf{x}}
\newcommand{\Iso}{\mathrm{Iso}}
\newcommand{\SO}{\mathrm{SO}}
\newcommand{\Hyp}{\mathbb{H}}
\newcommand{\SL}{\mathrm{SL}}
\newcommand{\dd}{\mathrm{d}}
\begin{document}

\title{Expanding curves in $\T^1(\Hyp^n)$ under geodesic flow and equidistribution in homogeneous spaces}

\author{Lei Yang $^{\ast}$}
\address{Mathematical Sciences Research Institute, Berkeley, CA, 94720, U.S.A.}

\curraddr{Einstein Institute of Mathematics, Hebrew University of Jerusalem, Jerusalem, 9190401, Israel}
\email{yang.lei@mail.huji.ac.il}
\thanks{$^{\ast }$ Supported in part by a Postdoctoral Fellowship at MSRI}

\subjclass[2010]{Primary 37A17; Secondary 22E40, 37D40}

\date{}


\date{}



\begin{abstract}
Let $H = \SO(n,1)$ and $A =\{a(t) : t \in \R\}$ be a maximal $\R$-split Cartan subgroup of $H$. Let $G$ be a Lie group containing 
$H$ and $\Gamma$ be a lattice of $G$. Let $x = g\Gamma \in G/\Gamma$ be a point of $G/\Gamma$ such that its $H$-orbit 
$Hx$ is dense in $G/\Gamma$. 
 Let $\phi: I= [a,b] \rightarrow H$ be an analytic curve, then $\phi(I)x$ gives an 
 analytic curve in $G/\Gamma$. In this article, we will prove the following result:
 if $\phi(I)$ satisfies some explicit geometric condition, then $a(t)\phi(I)x$ tends to be equidistributed in 
 $G/\Gamma$ as $t \rightarrow \infty$. It answers the first question asked by Shah in ~\cite{Shah_1} and 
 generalizes the main result of that paper.
\end{abstract}

\maketitle

\section{Introduction}
\subsection{Motivation}
\par Let $V$ be a $m$-dimensional hyperbolic space with finite volume, then $V$ can be written as $\mathbb{H}^m/\Gamma$, 
where $\Gamma < \Iso(\mathbb{H}^m) \cong \mathrm{SO}(m,1)$ is a lattice of $\SO(m,1)$. For $2 \leq n\leq m$, let 
$$\iota: \mathbb{H}^n \hookrightarrow \Hyp^m \rightarrow \Hyp^m/\Gamma $$
be a totally geodesic immersion from $\Hyp^n$ to $V$. Suppose the image $\iota(\Hyp^n)$ is dense in $V$. Now consider an analytic curve 
$$\phi: I=[a,b] \rightarrow \mathrm{T}^1(\mathbb{H}^n),$$
and its translates under geodesic flow $\{g_t: t>0\}$. 
Unless $\phi(I)$ is totally inside the contracting foliation of $\{g_t: t>0\}$, the length of $g_t(\phi(I))$ will increase exponentially as $t$ increases. Via the immersion $\iota$, 
we can embed $g_t(\phi(I))$ into $\mathrm{T}^1(V)$ and thus regard $g_t(\phi(I))$ as a curve in $\T^1(V)$. Then it is natural to ask whether the expanding curves $g_t(\phi(I))$ tend to be 
equidistributed in $\mathrm{T}^1(V)$, as $t \rightarrow +\infty$. In 2009, Nimish Shah ~\cite{Shah_1} answered this question in the following sense:
\begin{theorem}[see ~\cite{Shah_1}]
 \label{Shah_theorem_1}
\par There is a naturally difined visual map 
\begin{equation}
\label{equ_visual_map}
\mathrm{Vis}: \mathrm{T}^{1}(\mathbb{H}^n) \rightarrow \partial \mathbb{H}^n \cong \mathbb{S}^{n-1},
\end{equation}
which sends every vector $v \in \mathrm{T}^1(\mathbb{H}^n)$ to the ideal boundary $\partial \mathbb{H}^n$ through the 
geodesic ray starting from $v$. If $\mathrm{Vis}(\phi(I))$ is not contained in a proper subsphere of $\partial \mathbb{H}^n$, 
then the expanding curves, $\{g_t(\phi(I)): t >0\}$, will tend to be equidistributed in $\mathrm{T}^1(V)$ as 
$t \rightarrow +\infty$. In other words, for any continuous, compactly supported function $f \in C_c(\mathrm{T}^1(V))$, 
$$\lim_{t \rightarrow +\infty} \frac{1}{b-a} \int_{a}^b f(g_t(\phi(s))) \dd s = \int_{\mathrm{T}^1(V)} f(v)\dd \mathrm{Vol}(v).$$
\end{theorem}
In ~\cite{Shah_1}, the theorem is proved using ergodic theory of homogeneous dynamics. It turns out that the above theorem is equivalent 
to the following theorem in homogeneous dynamics:
\begin{theorem}
\label{Shah_theorem_2}
Let $H = \mathrm{SO}(n,1)$, and $G= \mathrm{SO}(m,1)$, where $m\geq n \geq 2$. Fix an 
embedding $H \hookrightarrow G$. Let $A=\{a(t): t \in \R\}$ be a maximal $\R$-split Cartan subgroup of $H$ (it is one-parameter subgroup 
because $\R$-rank of $H$ is $1$). Let $\Gamma < G$ be a lattice of $G$, then $G/\Gamma$ admits a $G$-invariant probability measure, denoted by 
$\mu_G$. Fix a point $x = g\Gamma \in G/\Gamma$ such that its $H$-orbit $Hx$ is dense in $G/\Gamma$. Let $\phi: I=[a,b] \rightarrow H$ be an 
analytic curve such that $\phi(I)$ is expanded by $a(t)$ for $t >0$. If the image of $\phi(I)$ under the visual map 
$$\mathrm{Vis}': H \rightarrow \partial \mathbb{H}^n$$ 
is not contained in a proper subsphere of $\partial \mathbb{H}^n \cong \mathbb{S}^{n-1}$, then as 
$t\rightarrow +\infty $, $a(t)\phi(I)x$ tends to be equidistributed in $G/\Gamma$, i.e., for 
any $f \in C_c(G/\Gamma)$, 
$$\lim_{t\rightarrow +\infty} \frac{1}{b-a}\int_a^b f(a(t)\phi(s)x)\dd s = \int_{G/\Gamma} f \dd\mu_G .$$
\end{theorem}
\begin{remark}
\label{remark_1}
 \par For $H=\mathrm{SO}(n,1)$, the visual map $\mathrm{Vis}': H \rightarrow \partial \mathbb{H}^n$ is defined as follows. Note that there is a natural action of $H$ on the unit tangent bundle of
$n$-dimensional hyperbolic space $\T^1(\mathbb{H}^n)$. For a fixed vector 
$v_0 \in \T^1(\mathbb{H}^n)$, we denote by $M$ the stabilizer of $v_0$, it is well 
known that $M \cong \mathrm{SO}(n-1)$. Therefore $\T^1(\mathbb{H}^n) \cong M\setminus H$. 
We can choose $v_0$ such that $M \subset Z_H(A)$, where $Z_H(A)$ denotes the centralizer of $A$ in $H$. 
Under this identity,
the action of geodesic flow $\{g_t: t \in \R\}$ on $\T^1(\mathbb{H}^n)$ is the same as the multiplication action of 
$A = \{a(t): t \in \R\}$ on $M\setminus H$, i.e., 
for $v = Mh \in \T^1(\mathbb{H}^n)$, $g_t(v) = a(t)M h = M a(t) h$. Now let $\tau:
H \rightarrow \T^1(\mathbb{H}^n) \cong M\setminus H$ denote the canonical
projection, then we could define the visual map 
$$\mathrm{Vis}'=\mathrm{Vis}\circ \tau: H \rightarrow \partial \mathbb{H}^n.$$ 
\end{remark}

\subsection{Main result}

\par The proof of Theorem \ref{Shah_theorem_2} makes use of Ratner's theorem concerning the classification of probability measures invariant 
under unipotent flows, and the linearization technique. In ~\cite{Shah_1}, most of the argument works if one replaces $\mathrm{SO}(m,1)$ by arbitrary 
Lie group $G$ containing $H= \mathrm{SO}(n,1)$. But to prove that if $a(t)\phi(I)x$ does not 
tend to be equidistributed, then $\mathrm{Vis}'(\phi(I))$ must be contained in a proper
subsphere of $\partial \mathbb{H}^n$, the argument heavily depends on the group structure of 
$G= \mathrm{SO}(m,1)$. In ~\cite{Shah_1}, it was conjectured that the same result will still hold if one replaces 
$\mathrm{SO}(m,1)$ by arbitrary Lie group $G$ containing $H$. It was also selected as an unsolved conjecture in 
Gorodnik's survey ~\cite{Gorodnik} (see ~\cite[Conjecture 19]{Gorodnik}). The main purpose of 
this article is to prove this conjecture. Our main result is the following:
\begin{theorem}
\label{goal_thm} 
Let $H = \SO(n,1)$ where $n \geq 2$, and let $A = \{a(t): t \in \R\}$ be a maximal $\R$-split Cartan subgroup of $H$. Let $G$ be a Lie group containing $H$ and let $\Gamma < G$ be a lattice of $G$. Then $G/\Gamma$ admits a $G$-invariant probability measure, denoted by $\mu_G$. Let $x = g \Gamma$ be a point such that $Hx$ is dense in $G/\Gamma$. Let $\phi: I =[a,b] \rightarrow H$ be an analytic curve. If $\mathrm{Vis}'(\phi(I))$ is not contained in a proper subsphere of $\partial \mathbb{H}^n \cong \mathbb{S}^{n-1}$, then $a(t)\phi(I)$ tends to be equidistributed 
as $t \rightarrow \infty$. In other words, for any $f\in C_c(G/\Gamma)$,
$$
\lim_{t\rightarrow \infty} \frac{1}{|b-a|}\int_{a}^b f(a(t) \phi(s)x)\dd s
=\int_{G/\Gamma} f \dd\mu_G.
$$
\end{theorem} 

\begin{remark}
 The assumption that $Hx$ is dense in $G/\Gamma$ does not harm the generality of the theorem. In fact, since $\mathrm{SO}(n,1)$ is generated by its one parameter unipotent 
 subgroups, by Ratner's theorem (see ~\cite{Ratner_2}), 
 the closure of $Hx$ in $G/\Gamma$ is homogeneous, i.e., there exists some closed subgroup $F$ 
 of $G$ containing $H$ such that $F\cap g\Gamma g^{-1}$ is a lattice of $F$ and moreover 
 the closure $\overline{Hg\Gamma} = Fg\Gamma$. It implies that $H g\Gamma g^{-1}$ is dense in 
 $F g\Gamma g^{-1}$. Since $F g\Gamma g^{-1} \cong F/F\cap g\Gamma g^{-1}$, we will 
 have $H (F\cap g\Gamma g^{-1})$ is dense in $F/F\cap g\Gamma g^{-1}$. 
 Then we can replace $G$ by $F$, $\Gamma$ by $F\cap g\Gamma g^{-1}$ 
 and $g$ by $e$ to make the $H$-orbit dense.
\end{remark}
\subsection{Related results}
\par The study of limit distribution of evolution of curves on 
homogeneous spaces under some diagonal flow was initiated by Nimish Shah 
in several papers: ~\cite{Shah_1}, ~\cite{Shah_2}, ~\cite{Shah_3} and ~\cite{Shah_4}.  
\par ~\cite{Shah_2} proves that the equidistribution result in ~\cite{Shah_1} also holds for $C^n$-smooth curves.

\par ~\cite{Shah_3} studies the action of $\mathrm{SL}(n+1,\R)$ on a homogeneous space $G/\Gamma$. Suppose $x =g\Gamma \in G/\Gamma$ has 
dense $\mathrm{SL}(n+1,\R)$-orbit in $G/\Gamma$. Given the diagonal flow
$$A:= \left\{a(t):=\begin{bmatrix}e^{nt} & \\ & e^{-t} \mathrm{I}_n\end{bmatrix}: t\in \R \right\},$$
and an analytic curve 
$$\phi: I=[a,b]\rightarrow \mathrm{SL}(n+1,\R),$$
the equidistribution result for $a(t)\phi(I)x$ (as $t\rightarrow +\infty$) is established assuming 
some geometric condition on $\phi(I)$.
\par ~\cite{Shah_4} considers some more general diagonal flow in $\mathrm{SL}(n+1,\R)$ and establishes a similar equidistribution result. 
\par Both the results of ~\cite{Shah_3} and ~\cite{Shah_4} have interesting applications to Diophantine approximation, see ~\cite{Dani_2}, ~\cite{Kleinbock_Weiss},
~\cite{Shah_3} and ~\cite{Shah_4} for details.

\par To generalize the result of ~\cite{Shah_1} to general Lie group 
$G$, one has to avoid the original group theoretic argument, and get the geometric condition 
($\mathrm{Vis}'(\phi(I))$ is contained in a proper subsphere) only from the representation of $H$ on a finite dimensional 
vector space $V$. This is accomplished in this article based on a new observation on the representation of $\mathrm{SL}(2,\R)$. It will be 
proved in Section \ref{basic lemma proof}.
\par This article is organized as follows: 
\begin{itemize}
 \item In Section \ref{preliminary}, we recall some basic facts concerning the structure of 
$H=\mathrm{SO}(n,1)$ and geodesic flow on $\T^1(\mathbb{H}^n) \cong M\setminus H$, 
and give a basic reduction of the original problem.
\item In Section \ref{limit measure invariant unipotent}, 
we follow the argument 
in ~\cite{Shah_1} to show that the limit measure of the evolutions of the normalized measure on our original 
curve under the action of geodesic flow is also a probability measure on $G/\Gamma$ and is invariant under some 
unipotent subgroup.
\item In Section \ref{linearization}, we use the linearization technique developed in ~\cite{Shah_1} to show that if the equidistribution fails, then the curve satisfies a linear algebraic condition concerning a 
particular representation of $\mathrm{SO}(n,1)$ on a finite dimensional vector space $V$. 
\item In Section \ref{basic lemma proof}, we will prove a technical 
result concerning representations of $\mathrm{SL}(2,\R)$. It is crucial in the proof of Theorem \ref{goal_thm}.
\item In Section \ref{conclusion}, we combine the results we proved in Section 
\ref{linearization} and Section \ref{basic lemma proof} to complete the proof of Theorem \ref{goal_thm}.
\end{itemize}
\begin{notation}
 In this article, we will use the following notations: for $\epsilon> 0$ small, 
 and two quantities $A$ and $B$, $A \overset{\epsilon}{\approx} B$ means that 
 $|A-B| \leq \epsilon$. Fix a right $G$-invariant metric $d(\cdot, \cdot)$ on $G$. For $x_1, x_2 \in G/\Gamma$,
 and $\epsilon >0$, $x_1 \overset{\epsilon}{\approx} x_2$ means $x_2 =g x_1$ such that $d(g,e)< \epsilon$.
\end{notation}

\noindent {\em Acknowledgements.} I would like to express my deep gratitude to my advisor, Professor Nimish Shah, for suggesting this problem to me, and 
his continuous advise and support during the process of the work. I also would like to thank Professor Kleinbock for reading
my manuscript and giving me many valuable comments and suggestions to improve the exposition of the article.  Thanks are also owed the referee 
for many useful suggestions.


\section{Preliminaries and basic reduction}
\label{preliminary}

\subsection{Preliminaries concerning the structure of $\SO(n,1)$}
We realize $H=\mathrm{SO}(n,1)$ as the group of $n+1$ by $n+1$ matrices with determinant one and preserving the
quadratic form $Q$ in $(n+1)$ real variables defined as follows:
$$
Q(x_0,x_1,\dots , x_n)=2x_0 x_n -(x_1^2 + \dots + x_{n-1}^2).
$$
It is easy to check that $Q$ is of signature $(n,1)$. 
Let 
$$A:=\left\{ a(t) := \begin{bmatrix} e^t & & & & \\ & 1 & & & \\ & & \ddots & & \\
& & & 1 & \\ & & & & e^{-t} \end{bmatrix} : t \in \R \right\},$$ 
then $A$ is a maximal $\R$-split one parameter Cartan subgroup of $H$. Let $A^+ :=\{a(t): t > 0\}$. Let $M \cong \SO(n-1)$ be defined as in Remark \ref{remark_1}, then by our discussion there, for $v = Mh \in M\setminus H \cong \T^1(\mathbb{H}^n)$, the
orbit of $v$ under geodesic flow $\{g_t : t\in \R\}$ is the orbit
under the action of $A$, say $\{M a(t) h: t \in \R \}$. Define the character 
$$\alpha: A \rightarrow \mathbb{R}^+$$ 
by $\alpha(a(t))=e^{t/2}$, then $A^+=\{a\in A: \alpha(a)>1\}$. Let
$K \cong \mathrm{SO}(n)$ be a maximal compact subgroup of $H$ such that $M=Z_H(A)\cap K$,
where $Z_H(A)=MA$ denotes the centralizer of $A$ in $H$. Then in the above realization,
$$
M = \left\{ m=\begin{bmatrix} 1 &  &  \\  &
k(m) &  \\  &  & 1\end{bmatrix} : k(m) \in \mathrm{SO}(n-1) \right\}.
$$
Here $k:M\rightarrow \mathrm{SO}(n-1)$ gives an isomorphism between $M$ and $\SO(n-1)$. 
\par Define:
$$
N^-:=\{h\in H: a^k h a^{-k}\rightarrow e \text{ as } k\rightarrow
+\infty \text{ for any } a\in A^+\},$$
and
$$
N:=\{h\in H: a^{-k}h a^k \rightarrow e \text{ as } k\rightarrow
+\infty \text{ for any } a \in A^+\}.
$$
Then $P^-:=MAN^-$ is a minimal parabolic subgroup of $H$. It is well known that $N$ is 
an abelian Lie subgroup of $H$. Let
$\mathfrak{n}$ denote the Lie algebra of $N$, then
$\mathfrak{n}\cong \mathbb{R}^{n-1}$.
Let $u: \mathfrak{n}\cong \mathbb{R}^{n-1}\rightarrow
N$ denote the exponential map from $\mathfrak{n}$ to $N$, then for $\mathbf{x}=(x_1, x_2,\dots, x_{n-1}) \in \R^{n-1} \cong \mathfrak{n}$,
$$u(\mathbf{x})=\begin{bmatrix} 1 & x_1 & \dots & x_{n-1} & \|\mathbf{x}\|^2/2
\\  &  1 & & & x_1 \\ & & \ddots &   & \vdots \\ & & & 1 & x_{n-1}
\\ & & & & 1\end{bmatrix} .$$
Similarly, for $\mathfrak{n}^- \cong \R^{n-1}$, the Lie algebra of $N^-$, one could
define 
$$u^-:  \mathfrak{n}^- \cong \mathbb{R}^{n-1} \rightarrow N^-.$$ Moreover, for
$\mathbf{x} \in \R^{n-1} \setminus \mathbf{0}$, there exists a subgroup of $H$, isomorphic to $\SL(2,\R)$, that contains the one-parameter unipotent subgroup
$\{u(t\mathbf{x}): t\in \mathbb{R}\}$ and the diagonal
subgroup $A$. Let us denote this subgroup by $\SL(2,\mathbf{x})$. In $\SL(2, \mathbf{x})$, $u(r\mathbf{x})$
corresponds to 
$$\begin{bmatrix} 1& r \\ 0 &
1\end{bmatrix},$$ 
$a(t)$ corresponds to 
$$\begin{bmatrix}
e^{t/2} & 0 \\ 0 & e^{-t/2}
\end{bmatrix},$$
and $u^{-}(r \x / \|\x\|^2 )$
corresponds to 
$$\begin{bmatrix}1 & 0 \\ r & 1 \end{bmatrix}.$$

\par Let $M$ act on $\mathbb{R}^{n-1}$
by the natural action of $\SO(n-1)$ on $\R^{n-1}$, then it is easily seen that 
for all $z \in M$ and $\mathbf{x} \in \R^{n-1}$, $u(z \mathbf{x})= z u(\mathbf{x}) z^{-1}$. 

\subsection{Basic reduction}
\par We first reduce Theorem \ref{goal_thm} to the following theorem:
\begin{theorem}
\label{main_theorem} 
Let $\varphi: I=[a,b] \rightarrow
\mathbb{R}^{n-1}$ be an analytic curve which is not contained in any
proper subsphere or proper affine subspace of $\R^{n-1}$. 
Let $x = g\Gamma \in G/\Gamma$ such that $Hx$ is dense in $G/\Gamma$. Then $a(t) u(\varphi(I))x$ tends to be equidistributed in $G/\Gamma$ as $t \rightarrow +\infty$, i.e., for any
$f\in C_c(G/\Gamma)$,
$$
\lim_{t\rightarrow \infty} \frac{1}{|I|}\int_{I} f(a(t)
u(\varphi(s))x) \dd s=\int_{G/\Gamma} f \dd \mu_G.
$$
\end{theorem}

\begin{proof}[Proof of Theorem \ref{goal_thm} assuming Theorem \ref{main_theorem}]
\par Let $\phi: I \rightarrow H := \mathrm{SO}(n,1)$ be
an analytic curve, then by the canonical decomposition $H = N^- Z_H(A) N$, there exists some $\varphi: I \rightarrow
\mathbb{R}^{n-1}$ such that $\phi(s)= N^{-}(s)K(s)u(\varphi(s))$
where $N^{-}(s) \in N^-$, and $K(s)\in Z_H(A) = AM$. One can easily check that $\mathrm{Vis}(\phi(I))$ is
contained in a proper subsphere if and only if $\varphi(I)$ is
contained in a proper subsphere or hyperplane. Then for any $f \in
C_c(G/\Gamma)$, the normalized line integral of $f$ on $\phi(I)$ can be written as:
$$\begin{array}{rl} & \frac{1}{|I|}\int_{I} f(g_t \phi(s)x)\dd s
\\ = & \frac{1}{|I|}\int_{I} f(a(t) N^{-}(s) K(s) u(\varphi(s))x)\dd s \\ 
= & \frac{1}{|I|}\int_{I} f(a(t) N^{-}(s)a(-t) K(s) a(t) u(\varphi(s))x)\dd s.
\end{array}$$
Since $N^{-}(I)\subset N^-$ is compact, $a(t)
N^-(I) a(-t) \rightarrow e$ as $t\rightarrow +\infty$. Then for
any given $\epsilon > 0$, there exists some $t_0$, such that for
$t> t_0$ large enough, $f(a(t) N^{-}(s)a(-t) K(s) a(t) u(\varphi(s))x)
\overset{\epsilon}{\approx} f(K(s)a_t u(\varphi(s))x)$ for all
$s\in I$. This means $\int_{I} f(g_t \phi(s)x)ds \overset{\epsilon
|I|}{\approx} \int_{I} f(K(s)a_t u(\varphi(s))x)ds$.
\par By our assumption, $\varphi(I)$ is not
contained in any proper subsphere or hyperplane, then the same holds for any subinterval $J \subset I$, since $\varphi$
is analytic. Then by Theorem \ref{main_theorem}, $\frac{1}{|J|} \int_{J} f(a(t)
u(\varphi(s))x)\dd s \rightarrow \int_{G/\Gamma} f(x) \dd\mu_G$ as
$t\rightarrow +\infty$ for any subinterval $J\subset I$. Since $f \in
C_c(G/\Gamma)$, we may divide $I$ into several subintervals
$J_1,J_2, \dots , J_k$ such that
$f(K(s_0)K^{-1}(s)x)\overset{\epsilon}{\approx} f(x)$ for any $x \in G/\Gamma$ and any
$s_0, s$ in the same subinterval. Since for each $J_i$, the
normalized line integral of $f$ on $J_i$ tends to $\int_{G/\Gamma} f(x) \dd\mu_G$, we
have that there exists some $t_1>0$ such that $\int_{J_i} f(a(t)
u(\varphi(s))x) \dd s \overset{\epsilon |J_i|}{\approx} |J_i| \int_{G/\Gamma}
f \dd\mu_G$ for all $J_i$ and $t> t_1$. For fixed $J_i$ and a fixed point
$s_0 \in J_i$, let us define 
$$f_0 (x):= f(K(s_0)x) \text{ for } x \in G/\Gamma.$$ Then for $t > t_1$,
$$
\begin{array}{rl}
\int_{J_i} f(K(s)a(t) u(\varphi(s))x) \dd s & \overset{\epsilon
|J_i|}{\approx} \int_{J_i} f(K(s_0) a(t) u(\varphi(s))x)\dd s \\
 & =\int_{J_i} f_0( a(t) u(\varphi(s))x)\dd s \\
 & \overset{\epsilon|J_i|}{\approx} |J_i|\int_{G/\Gamma} f_0
 \dd \mu_G \\ & = |J_i| \int_{G/\Gamma} f \dd\mu_G.
\end{array}
$$
The last equality holds since $\mu_G$ is $G$-invariant, and in particular, is invariant under the action of $K(s_0)$. By repeating the above argument for $i=1,2,\dots, k$ and 
summing all these approximations, we have that
$$
\int_{I} f(K(s)a(t) u(\varphi(s))x)\dd s \overset{2 \epsilon
|I|}{\approx} |I| \int_{G/\Gamma}f \dd\mu_G,
$$
i.e., 
$$\frac{1}{|I|}\int_{I} f(K(s)a(t)
u(\varphi(s))x)\dd s \overset{2 \epsilon }{\approx} \int_{G/\Gamma}f
\dd\mu_G.
$$
Combined with the fact that $f(a(t)\phi(s)x)\overset{\epsilon}{\approx}
f(K(s)a(t) u(\varphi(s))x)$ for $t> t_0$, this implies that for $t > \max\{t_0,
t_1\}$,
$$
\frac{1}{|I|}\int_{I} f(a(t) \phi(s)x)\dd s \overset{3 \epsilon
}{\approx} \int_{G/\Gamma}f \dd\mu_G.
$$
  This completes the proof of Theorem \ref{goal_thm} since the above holds for arbitrary $\epsilon >0$.
 
\end{proof}
The main part of this article is devoted to the proof of Theorem \ref{main_theorem}.


\section{Non-divergence and unipotent invariance of limit measures}
\label{limit measure invariant unipotent}

\subsection{Modification of the original curve}
\par 
Let $\varphi: I=[a,b] \rightarrow \R^{n-1}$ denote an analytic curve.
For $t > 0$, we define $\lambda_t$ to be the normalized Lebesgue measure on 
the curve $a(t)u(\varphi(s))x$, i.e., for any continuous compactly support function 
$f\in C_c(G/\Gamma)$, 
$$\lambda_t(f) := \frac{1}{|I|}\int_a^b f(a(t)u(\varphi(s))x)\dd s.$$
\par Our goal is to show that as $t \rightarrow +\infty$, $\lambda_t \rightarrow \mu_G$. Our basic tool is Ratner's theorem 
(see ~\cite{Ratner} and ~\cite{Ratner_2}). To apply Ratner's theorem, we need to find a unipotent subgroup which preserves limit points of $\{\lambda_t : t >0\}$. To do this, we modify the curve $u(\varphi(I))$ as follows. Fix a vector $\mathbf{e}_1 =(1,0,\dots, 0) \in \R^{n-1}$. Since $\varphi'(s) \neq \mathbf{0}$
for all $s \in I$, there exists an analytic curve $z: I \rightarrow M$ such that 
$$z(s)u(\varphi'(s))z^{-1}(s) = u(z(s)\varphi'(s)) = u(\mathbf{e}_1) \text{ for all } s \in I .$$
 Let $\mu_t$ denote the normalized Lebesgue measure on 
the modified curve $\{z(s)a(t)u(\varphi(s))x: s \in I\}$, i.e., for $f \in C_c(G/\Gamma)$,
$$\mu_t(f) := \frac{1}{b-a}\int_a^b f(z(s)a(t)u(\varphi(s))x)\dd s.$$

\begin{remark}
$\quad$
\par 1. The construction of the modified measure $\mu_t$ is due to Nimish Shah ~\cite[Section 3]{Shah_1}.
 \par 2. For any subinterval $J \subset I$, we define $\mu^{J}_t$ (and $\lambda^{J}_t$, respectively) 
 to be the normalized Lebesgue measure on the curve $\{z(s) a(t) u(\varphi(s)): s \in J\}$ (and on $a(t)u(\varphi(J))$, respectively), i.e., for $f\in C_c(G/\Gamma)$,
 $$\mu^J_t(f) := \frac{1}{|J|} \int_J f(z(s)a(t)u(\varphi(s))x)\dd s,$$
 and
 $$\lambda^J_t(f) := \frac{1}{|J|} \int_J f(a(t)u(\varphi(s))x)\dd s.$$
\end{remark}

\begin{proposition}
  Suppose for any subinterval 
 $J \subset I$, 
 $\mu^{J}_t \rightarrow \mu_G$ as $t \rightarrow +\infty$, then the same holds for $\lambda_t$, i.e., $\lambda_t \rightarrow \mu_G$ as 
 $t \rightarrow +\infty$.
\end{proposition}

\begin{proof}
The proof is similar to the proof of Theorem \ref{goal_thm} assuming Theorem \ref{main_theorem}.
\par For any $f \in C_c(G/\Gamma)$, and $\epsilon >0$, one can divide $I$ into small subintervals 
$J_1, J_2, \dots, J_k$, such that for each $J_i$ and $s_1, s_2 \in J_i$, $f(z^{-1}(s_1)z(s_2)x) \overset{\epsilon}{\approx} f(x)$ for any 
$x \in G/\Gamma$. 
\par For a fixed subinterval $J_i$, let us fix a point $s_0 \in J_i$, and define 
$f_0(x) = f(z^{-1}(s_0)x)$. Then for any $s \in J_i$, 
$$f(a(t)u(\varphi(s))x) \overset{\epsilon}{\approx} f_0(z(s)a(t)u(\varphi(s))x),$$
therefore 
$$\lambda^{J_i}_t(f) \overset{\epsilon}{\approx} \mu^{J_i}_t(f_0),$$
for any $t >0$. Since $\mu^{J_i}_t(f_0) \rightarrow \mu_G(f_0)$, there exists a constant $T_i >0$,
such that for $t > T_i$, 
$$\mu^{J_i}_t (f_0) \overset{\epsilon}{\approx} \mu_G(f_0).$$
On the other hand, since $\mu_G$ is $G$-invariant, and $f_0(x) = f(z^{-1}(s_0)x)$ is a left 
translation of $f(x)$, we have 
$$\mu_G(f_0) = \mu_G(f).$$
Therefore, for $t >T_i$,
$$\lambda^{J_i}_t(f) \overset{2\epsilon}{\approx} \mu_G(f).$$
By repeating the above argument for all $i=1,2,\dots,k$ and summing the above approximations for 
all $i=1,2,\dots , k$, we get that for $t >\max_{1\leq i \leq k} \{T_i\}$, 
$$\lambda_t(f) \overset{2\epsilon}{\approx} \mu_G(f).$$
The above holds for arbitrary $\epsilon >0$ and $f \in C_c(G/\Gamma)$, so this proves that
$\lambda_t \rightarrow \mu_G$ as $t \rightarrow +\infty$.
\end{proof}
\par This proposition allows us to study $\mu_t$ instead of $\lambda_t$.

\subsection{Non-divergence of limit measures}
\par We first show that every limit point $\mu_{\infty}$ of $\{\mu_t : t >0\}$ is still 
a probability measure.
\begin{proposition}
\label{prop_no_escape_mass}
 For any $\epsilon >0$, there exists a compact subset $\mathcal{K}_{\epsilon} \subset G/\Gamma$ such that 
 $\mu_t(\mathcal{K}_{\epsilon}) \geq 1-\epsilon$ for all $t >0$.
\end{proposition}
\par The proposition is due to N. Shah ~\cite{Shah_1}. We modify the original proof 
a little bit to fit our setup.
\begin{definition}
\label{def_representation_G}
 Let $\mathfrak{g}$ denote the Lie algebra of $G$, and let $d := \dim G$. Define
 $$V := \bigoplus_{i=1}^{d} \bigwedge^i \mathfrak{g}.$$
 Fix a norm $\|\cdot\|$ on $V$. Let $G$ act on $V$ via $\bigoplus_{i=1}^d \bigwedge^i \mathrm{Ad}(G)$. This defines a linear representation of $G$:
 $$G \rightarrow \mathrm{GL}(V).$$
\end{definition}
The following theorem due to Dani and Kleinbock-Margulis is the basic tool to prove Proposition \ref{prop_no_escape_mass}.
\begin{theorem}[see ~\cite{Dani} and ~\cite{Kleinbock_Margulis}]
 \label{non_divergence_theorem}
There exist finitely many vectors  $v_1, v_2, \dots , v_r \in V$ such that for each 
 $i=1,2,\dots, r$, the orbit $\Gamma v_i$ is discrete. Moreover, for any $\epsilon >0$ and $R > 0$, there exists a
compact set $K\subset G/\Gamma$ such that for any $t >0$ and any subinterval $J\subset I$, one of the
following holds:
\begin{enumerate}[label=\textbf{A.\arabic*}]
\item There exist $\gamma \in \Gamma$ and $j\in \{1,\dots , r\}$
such that $$\sup_{s\in J} \| a(t)u(\varphi(s)) g \gamma v_j \| < R,$$
\item $$|\{ s\in J:  a(t)u(\varphi(s))x \in K\}| \geq (1-\epsilon)|J|.$$
\end{enumerate}
\end{theorem}
\begin{remark}
 For the case when $\varphi(s)$ is a polynomial curve, the proof is due to Dani ~\cite{Dani}, and for the case of analytic curves, 
 the proof is due to Kleinbock and Margulis ~\cite{Kleinbock_Margulis}. The crucial part of the proof is to show that there exist constants $C>0$ and $\alpha>0$ such that for any $t >0$, all the coordinate functions of $a(t)u(\varphi(\cdot))$ are $(C, \alpha)$-good. Here a function $f: I \rightarrow \R$ is called
 $(C,\alpha)$-good if for any subinterval $J \subset I$ and any $\epsilon >0$, the following holds:
 $$|\{s\in J: |f(s)|<\epsilon\}| \leq C\left(\frac{\epsilon}{\sup_{s\in J}|f(s)|}\right)^{\alpha} |J|.$$
 See ~\cite{Kleinbock_Margulis} for details.
\end{remark}

\begin{definition}
 Let $F$ be a Lie group, and $V$ be a finite dimensional linear representation of $F$. Then for a one-parameter diagonalizable subgroup 
 $A=\{a(t): t \in \R\}$ of $F$, we could decompose $V$ as direct sum of eigenspaces of $A$, i.e.,
 $$V = \bigoplus_{\lambda \in \R} V^{\lambda}(A),$$
 where $V^{\lambda}(A) = \{v\in V: a(t)v = e^{\lambda t} v\}$.
 \par We define
 $$V^{+}(A) = \bigoplus_{\lambda >0} V^{\lambda}(A),$$
 $$V^{-}(A) = \bigoplus_{\lambda <0 } V^{\lambda}(A),$$
 and similarly,
 $$V^{+0}(A) = V^{+}(A) + V^0(A),$$
 $$V^{-0}(A) = V^{-}(A) + V^0(A) .$$
 For a vector $v \in V$, we denote by $v^{+}(A)$ ($v^{-}(A)$, $v^0(A)$, $v^{+0}(A)$ and $v^{-0}(A)$ respectively) the projection of $v$ onto $V^{+}(A)$ ($V^{-}(A)$, $V^0(A)$, $V^{+0}(A)$ and 
 $V^{-0}(A)$ respectively).
\end{definition}

The proof of Proposition \ref{prop_no_escape_mass} is based on the following basic lemma on representations of $\mathrm{SL}(2,\R)$, 
due to N. Shah:
\begin{lemma}[See Lemma 2.3 of ~\cite{Shah_1}]
\label{lemma_non_divergent}
\par Let $V$ be a representation of $\mathrm{SL}(2,\R)$, fix a norm $\|\cdot\|$ on $V$.
We define 
$$A := \left\{a(t):= \begin{bmatrix}e^{t} & \\ & e^{-t}\end{bmatrix}: t \in \R \right\},$$
and
$$U^{+}(A) := \left\{u(r):=\begin{bmatrix}1 & r \\ 0 & 1 \end{bmatrix}: r \in \R \right\}.$$
Then for any $r >0$, there exists a
constant $\kappa=\kappa(r)> 0$ such that for any $v \in V$,
$$
\max\{\|v^+(A)\|,\|(u(r)v)^{+0}(A)\|\} \geq \kappa \|v\|.
$$
\end{lemma}
\par For $\x \in \R^{n-1}$, consider the subgroup $\SL(2,\x) \cong \SL(2,\R) \subset H$. Recall that $\begin{bmatrix}e^t & \\ & e^{-t}\end{bmatrix}$ corresponds to $a(t) \in \SL(2,\x)$ and $\begin{bmatrix}1 & 1 \\ 0 & 1\end{bmatrix}$ corresponds to $u(\x) \in \SL(2, \x)$. Therefore, the above lemma easily implies the following statement:
\begin{corollary}[See Corollary 2.4 of ~\cite{Shah_1}]
\label{corollary_of_lemma_nondivergent}
 Let $V$ be a linear representation of $H=\mathrm{SO}(n,1)$ ($n\geq 2$), fix a norm $\|\cdot\|$ on $V$. Let $A$ be the 
 one-parameter diagonal subgroup of $H$ as above.
Then given a compact set $\mathcal{F}\subset N\backslash \{e\}$, there exists
a constant $\kappa >0$ such that for any $\mathbf{u}\in \mathcal{F}$ and any
$v\in V$,
$$
\max \{\|v^+(A)\|,\|(\mathbf{u}v)^{+0}(A)\|\}\geq \kappa \|v\|,
$$
In particular, for any $t >0$, any $\mathbf{u}\in \mathcal{F}$ and any
$v\in V$,
$$
\max \{\|a(t)v\|,\|a(t)\mathbf{u}v\|\} \geq \kappa \|v\|.
$$
\end{corollary}

\begin{proof}[Proof of Proposition \ref{prop_no_escape_mass}]
  Fix $s_1 \in I$, and a compact subset $\mathcal{F} \subset N$ containing $\{u(\varphi(s)-\varphi(s_1)): s \in I\}$. Let $\kappa >0$ be the constant provided 
  in Corollary \ref{corollary_of_lemma_nondivergent} applied to $\mathcal{F}$. 
\par For any $\epsilon>0$ and $R >0$, by Theorem \ref{non_divergence_theorem}, there exists a 
 compact subset $\mathcal{C}_{\epsilon}\subset G/\Gamma$, such that for any $t >0$, one of the
following holds:
\begin{enumerate}[label=\textbf{A.\arabic*}]
\item \label{case_one} There exist $\gamma \in \Gamma$ and $j\in \{1,\dots , r\}$
such that $$\sup_{s\in I} \| a(t)u(\varphi(s)) g \gamma v_j \| < R,$$
\item \label{case_two} $$|\{ s\in I:  a(t)u(\varphi(s))x \in \mathcal{C}_{\epsilon}\}| \geq (1-\epsilon)|I|.$$
\end{enumerate}
 Fix $s_2 \in I\setminus\{s_1\}$ and denote $\x = s_2 - s_1$. Then because $\Gamma v_i$ is discrete in $V\setminus\{\mathbf{0}\}$, there exists
 a uniform constant $r>0$ such that 
 $$\|u(\varphi(s_1))g \gamma v_i\| \geq r,$$
 for any $v_i$ and $\gamma \in \Gamma$. By Corollary \ref{corollary_of_lemma_nondivergent} applied to $u(\varphi(s_1))g \gamma v_i$
 and $u(\x)$, we get for any $v_i$, $\gamma \in \Gamma$ and $t > 0$,
 $$\sup_{s \in I} \|a(t)u(\varphi(s))g\gamma v_i\| \geq \kappa r.$$
 If we choose $R < \kappa r$, then case \ref{case_one} above can not hold, this shows that 
 $$|\{s\in I: a(t)u(\varphi(s))x \in \mathcal{C}_{\epsilon}\}| \geq (1-\epsilon)|I|.$$
 Let $\mathcal{K}_{\epsilon} = M\mathcal{C}_{\epsilon}$, since $z(s)\in M$, we have 
 $$|\{s\in I: z(s)a(t)u(\varphi(s))x \in \mathcal{K}_{\epsilon}\}| \geq (1-\epsilon)|I|,$$
 i.e., $\mu_t(\mathcal{K}_{\epsilon}) \geq 1-\epsilon$ for all $t >0$.
 \par This completes the proof.
\end{proof}
\subsection{Unipotent invariance of limit measures}
\par We now show that any limit measure $\mu_{\infty}$ of $\{\mu_t: t >0\}$ is invariant under the unipotent subgroup
$W = \{u(t \mathbf{e}_1): t \in \R\}$, which is the main reason to modify the measure from $\lambda_t$ to $\mu_t$:
\begin{proposition}
 \label{prop_invariant_under_unipotent}
 Let $t_i \rightarrow +\infty$ be a sequence such that $\mu_{t_i} \rightarrow \mu_{\infty}$ in weak-$\ast$ topology, then 
 $\mu_{\infty}$ is invariant under $W$-action.
\end{proposition}
\begin{proof}
Given any $f\in C_c(G/\Gamma)$, and $r \in \R$, we have 
$$
\int f(u(r\mathbf{e}_1)x)\dd\mu_{\infty}  =  \lim_{t_i \rightarrow +\infty} \frac{1}{|I|}\int_a^b f(u(r\mathbf{e}_1)z(s)a(t_i)u(\varphi(s))x) \dd s.
$$
We want to argue that 
$$u(r\mathbf{e}_1)z(s)a(t_i)u(\varphi(s)) \approx z(s+r e^{-t_i})a(t_i)u(\varphi(s+r e^{-t_i})).$$
Since $z(s+r e^{-t_i}) \approx z(s)$ for $t_i$ large enough, it suffices to show that 
$$u(r\mathbf{e}_1)z(s)a(t_i)u(\varphi(s)) \approx z(s)a(t_i)u(\varphi(s+r e^{-t_i})).$$
In fact,
$$\begin{array}{cl}
   & z(s)a(t_i)u(\varphi(s+r e^{-t_i})) \\
   = & z(s) a(t_i)u(\varphi(s) + r e^{-t_i} \varphi'(s) + \frac{r^2}{2} e^{-2 t_i} \varphi^{(2)}(s')) \\
   = & z(s)u(r \varphi'(s)) u(\frac{r^2}{2} e^{- t_i} \varphi^{(2)}(s')) a(t_i) u(\varphi(s)).
  \end{array}
$$
By the definition of $z(s)$, we have the above is equal to
$$ u(\frac{r^2}{2} e^{-t_i} z(s)\varphi^{(2)}(s'))u(r \mathbf{e}_1)z(s)a(t_i)u(\varphi(s)).$$
For $t_i$ large enough, $u(\frac{r^2}{2} e^{-t_i} z(s)\varphi^{(2)}(s'))$ is very close to $e$. Therefore, for any $\delta >0$, there exists 
$T>0$, such that for $t_i > T$, 
$$u(r\mathbf{e}_1)z(s)a(t_i)u(\varphi(s)) \overset{\delta}{\approx} z(s+ r e^{-t_i})a(t_i)u(\varphi(s+r e^{-t_i})).$$
Now for any $\epsilon >0$, we choose $\delta>0$ such that whenever $x_1 \overset{\delta}{\approx} x_2$, we have 
$f(x_1) \overset{\epsilon}{\approx} f(x_2)$. Then from the above argument, we have for $t_i > T$,
$$f(u(r \mathbf{e}_1) z(s) a(t_i)u(\varphi(s))x) \overset{\epsilon}{\approx} f(z(s+ r e^{-t_i}) a(t_i) u(\varphi(s+ r e^{-t_i}))x).$$
Therefore,
$$\begin{array}{cl} & \frac{1}{|I|}\int_a^b f(u(r \mathbf{e}_1) z(s) a(t_i)u(\varphi(s))x)\dd s \\
                  \overset{\epsilon}{\approx} & \frac{1}{|I|} \int_a^b f(z(s+ r e^{-t_i}) a(t_i) u(\varphi(s+ r e^{-t_i}))x) \dd s \\
                  = & \frac{1}{|I|} \int_{a+ r e^{-t_i}}^{b+ r e^{-t_i}} f(z(s)a(t_i)u(\varphi(s))x) \dd s.
\end{array}$$
It is easy to see that when $t_i$ is large enough,
$$\frac{1}{|I|} \int_{a+ r e^{-t_i}}^{b+ r e^{-t_i}} f(z(s)a(t_i)u(\varphi(s))x) ds \overset{\epsilon}{\approx} \frac{1}{|I|} \int_{a}^{b} f(z(s)a(t_i)u(\varphi(s))x) \dd s.$$
Therefore, for $t_i$ large enough,
$$\int f(u(r \mathbf{e}_1)x) \dd \mu_{t_i} \overset{2\epsilon}{\approx} \int f(x) \dd \mu_{t_i}.$$
Letting $t_i \rightarrow +\infty$, we have 
$$\int f(u(r \mathbf{e}_1)x) \dd \mu_{\infty} \overset{2\epsilon}{\approx} \int f(x) \dd \mu_{\infty}.$$
Since the above approximation is true for arbitrary $\epsilon >0$, we have that 
$\mu_{\infty}$ is $W$-invariant.
\end{proof}


\section{Ratner's theorem and the Linearization technique}
\label{linearization}

\subsection{Ratner's theorem}
\par Let $\mu_{\infty}$ be a limit measure of $\{\mu_t: t >0\}$, i.e., there exists a sequence $t_i \rightarrow +\infty$ such that 
$\mu_{t_i} \rightarrow \mu_{\infty}$ as $i \rightarrow \infty$ in weak-$\ast$ topology. By Proposition \ref{prop_no_escape_mass} and Proposition \ref{prop_invariant_under_unipotent},
$\mu_{\infty}$ is a probability measure and is $W$-invariant.
\par In this section we will apply Ratner's theorem and the linearization technique to study the property of $\mu_{\infty}$.
\begin{definition}
 Let $\mathcal{L}$ be the collection of analytic subgroups $L < G$ such that $L\cap \Gamma$ is a lattice of $L$. one can prove that
$\mathcal{L}$ is a countable set (see ~\cite{Ratner}).
\par For $L\in \mathcal{L}$, define:
$$N(L,W):= \{g\in G: g^{-1}Wg\subset L\},$$
and 
$$S(L,W):= \bigcup_{L'\in \mathcal{L}, L' \subsetneq L} N(L', W).$$
\end{definition}

\begin{theorem}[see Theorem 1 of ~\cite{Ratner}]
 \label{ratner_theorem} 
Let $\pi: G \rightarrow G/\Gamma$ denote the natural projction. 
Given a $W$-invariant probability measure $\mu$
on $G/\Gamma$
there exists $L\in \mathcal{L}$ such that:
$$\mu(\pi(N(L,W)))>0,$$  
and  
$$\mu(\pi(S(L,W)))=0.$$
Moreover, almost every $W$-ergodic component of $\mu$ on
$\pi(N(L,W))$ is a measure of the form $g\mu_L$ where $g\in
N(L,W)\backslash S(L,W)$, $\mu_L$ is a finite $L$-invariant measure
on $\pi(L)$, and $g\mu_L(E)=\mu_L(g^{-1}E)$ for all Borel sets
$E\subset G/\Gamma$. In particular, if $L\lhd G$, then $\mu$ is
$L$-invariant.
\end{theorem}
\par If $\mu_{\infty}=\mu_G$, then there is nothing to prove. So we may assume $\mu_{\infty}\neq \mu_G$. Then by Ratner's Theorem, 
there exists $L \in \mathcal{L}$ such that $\mu_{\infty}(\pi(N(L,W))) > 0$ and $\mu_{\infty}(\pi(S(L,W))) =0$. 
\subsection{The linearization technique}
Now we start to apply the linearization technique.
\par We start with some basic notations:
\begin{definition}
\par Let $V$ be the finitely dimensional representation of $G$ defined as in Definition \ref{def_representation_G}, for $L \in \mathcal{L}$,
we choose a basis $\mathfrak{e}_1, \mathfrak{e}_2, \dots, \mathfrak{e}_l$ of the Lie algebra $\mathfrak{l}$ of $L$, and define 
$$p_L = \wedge_{i=1}^l \mathfrak{e}_i \in V.$$
Define 
$$\Gamma_L := \left\{\gamma\in \Gamma: \gamma p_L = \pm p_L \right\}.$$
From the action of $G$ on $p_L$, we get a map:
$$\begin{array}{l} \eta:  G \rightarrow V,  \\
                     g \mapsto g p_L .
   \end{array}
$$
We define $\mathcal{A}$ to be the Zariski closure of $\eta(N(L,W))$. and for any compact subset $\mathcal{D} \subset \mathcal{A}$, we define
$$S(\mathcal{D}) := \left\{g \in N(L,W):  \eta(g\gamma) \in \mathcal{D} \text{ for some } \gamma \in \Gamma\setminus \Gamma_L\right\}.$$
\end{definition}

Concerning $S(\mathcal{D})$, we have the following important proposition:
\begin{proposition}[see Proposition 4.5 of ~\cite{Shah_1}]
\label{injective_prop}
 $S(\mathcal{D}) \subset S(L,W)$ and
$\pi(S(\mathcal{D}))$ is closed in $G/\Gamma$. Moreover, for any
compact set $\mathcal{K} \in G/\Gamma \backslash
\pi(S(\mathcal{D}))$, there exists some neighborhood $\Phi$ of
$\mathcal{D}$ in $V$ such that, for any $g\in G$ and $\gamma_1,
\gamma_2 \in \Gamma$, if $\pi(g)\in \mathcal{K}$ and $\eta(g
\gamma_i) \in \Phi$, $i=1,2$, then $\eta(\gamma_1)=\pm
\eta(\gamma_2)$.
\end{proposition}
\begin{proposition}[see Proposition 4.6 of ~\cite{Shah_1}]
\label{relative_small prop} Given a symmetric compact set
$\mathcal{C}\subset\mathcal{A}$ and $\epsilon > 0$, there exists a
symmetric compact set $\mathcal{D}\subset \mathcal{A}$ containing
$\mathcal{C}$ such that, given a symmetric neighborhood $\Phi$ of
$\mathcal{D}$ in $V$, there exists a symmetric neighborhood $\Psi$
of $\mathcal{C}$ in $V$ contained in $\Phi$ such that for any $t >0$, 
for any $v \in V$, and for any interval
$J\subset I$, one of the following holds:
\begin{enumerate}[label=\textbf{S.\arabic*}]
\item \label{1} $a(t)u(\varphi(s))v \in \Phi$ for all $s\in J$,
\item \label{2} $|\{s\in J: a(t)u(\varphi(s))v \in \Psi \}|\leq \epsilon |\{s\in J: a(t)u(\varphi(s))v \in \Phi\}|$.
\end{enumerate}
\end{proposition}
\begin{remark}
 The proof is similar to Theorem \ref{non_divergence_theorem}, and also follows from the fact that all coordinate functions of
 $a(t)u(\varphi(\cdot))v$ are 
 $(C, \alpha)$-good for some constants $C >0$ and $\alpha >0$.
\end{remark}
\subsection{Linear algebraic condition on the curve}
\par The following proposition is the 
aim of this section.
\begin{proposition}
 \label{prop_algebraic_condition}
 There exists a $\gamma \in \Gamma$ such that
 $$u(\varphi(s))g\gamma p_L \in V^{-0}(A), \text{ for all } s \in I .$$
 
\end{proposition}

\begin{proof}
 Take a compact subset $C \subset N(L,W))\setminus S(L,W)$ such that $\mu_{\infty}(\pi(C)) >c_0 >0$ for some constant $c_0$. Define 
 $\mathcal{C} := \eta(C)\cup - \eta(C)$, then $\mathcal{C} \subset \mathcal{A}$ is a compact subset. Choose a 
 compact subset $\mathcal{K}\subset G/\Gamma\setminus \pi(S(L,W))$ containing $\pi(C)$ in its interior. Applying Proposition \ref{relative_small prop},
 we have that for any $\epsilon >0$, there exists a symmetric compact subset $\mathcal{D} \subset \mathcal{A}$ containing $\mathcal{C}$ such that 
 the conclusion of Proposition \ref{relative_small prop} for $\mathcal{C}$, $\mathcal{D}$ and $\epsilon$. Let us fix $\epsilon \in (0, \frac{c_0}{2})$.
 Applying Proposition \ref{injective_prop} to 
 $\mathcal{D}$ and $\mathcal{K}$, we have that there exists an open neighborhood $\Phi$ of $\mathcal{D}$ such that the conclusion of Proposition 
 \ref{injective_prop} holds. 
 Choose a neighborhood
 $\Psi$ of $\mathcal{C}$ according to Proposition \ref{relative_small prop}. Since $M \cong \SO(n-1)$ is compact, we could enlarge $\mathcal{C}$, $\mathcal{D}$, $\mathcal{K}$, $\Phi$ and $\Psi$ to make them invariant under the action of $M$.
 \par Recall that $\mu_{\infty} = \lim_{i \rightarrow \infty} \mu_{t_i}$. We claim that for each $t_i$, there exists $\gamma_{t_i} \in \Gamma$ such that 
 $$a(t_i)u(\varphi(I))g \gamma_{t_i} p_L \subset \Phi.$$
 For contradiction, suppose it is not the case, i.e., 
 for all $\gamma \in \Gamma$, case \ref{1} in Proposition \ref{relative_small prop} does not hold for 
 $v = g\gamma p_L$ and $J=I$. Define
 $$J_{t_i} := \left\{ s\in I: a(t_i)u(\varphi(s))x \in \mathcal{K} : a(t_i)u(\varphi(s))g\Gamma p_L \cap \Psi \neq \emptyset  \right\}.$$
 Since $\mathcal{K}$ and $\Psi$ are $M$-invariant, we have 
 $$J_{t_i} = \left\{ s\in I: z(s)a(t_i)u(\varphi(s))x \in \mathcal{K} : z(s)a(t_i)u(\varphi(s))g\Gamma p_L \cap \Psi \neq \emptyset  \right\}.$$
 Note that $\mu_{t_i}$ denotes the normalized Lebesgue measure on the curve $\{z(s)a(t_i) u(\varphi(s))x : s \in I\}$ and $\mu_{t_i} \rightarrow \mu_{\infty}$ as $i \rightarrow \infty$. Since $\mu_{\infty}(\pi(C)) > c_0$, for $i$ large enough, $|J_{t_i}| > c_0|I|$ . 
 \par For any $s \in J_{t_i}$, by Proposition \ref{injective_prop}, up to $\pm$ sign, there exists unique $\gamma(s) p_L$ such that 
 $a(t_i)u(\varphi(s))g\gamma(s) p_L \in \Psi$, let $I_{\gamma(s)}$ be the maximal interval $I$ containing $s$ such that 
 $$a(t_i)u(\varphi(I))g \gamma(s) p_L \subset \Phi.$$
 From Proposition \ref{injective_prop} we know that there is no other $\gamma' p_L$ other than $\pm \gamma(s) p_L$ 
 and $s \in I_{\gamma(s)}\cap J_{t_i}$ such that
 $$a(t_i)u(\varphi(s))g \gamma' p_L \in \Psi.$$
 Therefore $J_{t_i}$ is covered by at most countably many intervals $I_{\gamma(s)}$'s which covers the whole interval $I$ at most twice, namely,
 every point belongs to at most two different intervals (this is because for any $s_1< s_2 \in J_{t_i}$, then from the above argument, the intersection 
 $I_{\gamma(s_1)}\cap I_{\gamma(s_2)} \subset (s_1, s_2)$).
 Moreover, because case \ref{1} in Proposition \ref{relative_small prop} does not hold, we have that \ref{2} must hold, i.e., 
 $$|J_{t_i} \cap I_{\gamma(s)}| < \epsilon |I_{\gamma(s)}|.$$
 This shows that 
 $$|J_{t_i}| < 2\epsilon |I|$$
 which contradicts to the fact that $|J_{t_i}| > c_0|I|$. This proves the claim.
 \par Since $\Gamma p_L$ is discrete in $V$, one of the following will happen:
 \begin{enumerate}
  \item $\|\gamma_{t_i} p_L\| \rightarrow +\infty$ as $i \rightarrow \infty$.
  \item $\gamma_{t_i} p_L$ remains the same for all large $i$.
 \end{enumerate}
If case (1) happens, define a unit vector $v_{t_i} = \frac{\gamma_{t_i} p_L}{\|\gamma_{t_i} p_L\|}$ for each $i$, 
then from 
$$a(t_i)u(\varphi(I)) \gamma_{t_i} p_L \subset \Phi$$
we have there is a constant $R$ such that 
$$\sup_{s \in I}\|a(t_i)u(\varphi(s))v_{t_i}\| \leq \frac{R}{\|\gamma_{t_i} p_L\|} \rightarrow 0.$$
Suppose $v_{t_i} \rightarrow v_{\infty}$ passing to some subsequence, then we have 
$$\sup_{s \in I} \|a(t_i)u(\varphi(s))v_{\infty}\| \rightarrow 0,$$
as $i \rightarrow +\infty$. This is impossible according to Corollary \ref{corollary_of_lemma_nondivergent}. 
Therefore $\gamma_{t_i} p_L = \gamma p_L$ remains the same for all large $i$. This means that for all $i >0$,
$$\sup_{s \in I}\|a(t_i)u(\varphi(s))g \gamma p_L\| \leq R.$$
This implies that for $v = g \gamma p_L$,
$$u(\varphi(s))v \in V^{-0}(A).$$
\par This completes the proof.
\end{proof}
\par Our goal is to get an explicit geometric condition of $\varphi(I)$ from the above linear algebraic condition.


\section{Basic lemma on $\mathrm{SL}(2,\R)$ representations}
\label{basic lemma proof}
\par This section is devoted to the proof of the following basic lemma concerning representations of $\mathrm{SL}(2,\R)$, which is crucial to 
analyze the condition $u(\varphi(s))v \in V^{-0}(A)$ we get in the previous section:
\begin{lemma}
 \label{basic_lemma}
 Let $V$ be a finite dimensional linear representation of $\mathrm{SL}(2,\R)$. Denote 
 $$
 A := \left\{a(t):= \begin{bmatrix}e^t & \\ & e^{-t}\end{bmatrix}: t \in \R\right\},
 $$
 and 
 $$
 U := \left\{u(s) := \begin{bmatrix}1 & s \\ 0 & 1\end{bmatrix} : s \in \R\right\}.
 $$
 Suppose there is a nonzero vector $v \in V^{-0}(A)$ satisfying 
 $$u(r)v \in V^{-0}(A),$$
 for some $r \in \R$, then $(u(r)v)^0(A) = \sigma v^0(A),$
 where $\sigma$ denotes the matrix
 $$\sigma = \begin{bmatrix}0 & -1 \\ 1 & 0\end{bmatrix} \in \SL(2,\R).$$
\end{lemma}
\begin{proof}
In the Lie algebra $\mathfrak{sl}(2,\R)$, let us denote 
$$\mathfrak{n} = \begin{bmatrix}0 & 1 \\ 0 & 0\end{bmatrix},$$
and 
$$\mathfrak{h} = \begin{bmatrix}1 & 0 \\ 0 & -1\end{bmatrix}.$$
\par We decompose $V$ into direct sum of irreducible subspaces
of $\mathrm{SL}(2,\mathbb{R})$, say $V=\bigoplus_{i=1}^m V_i$. For each irreducible component $V_i$, we have the corresponding
projection $q_i: V\rightarrow V_i$. Since every irreducible
representation of $\mathrm{SL}(2,\mathbb{R})$ can be written as
$\mathrm{Span}_{\mathbb{R}}\{w_0, w_1,\dots , w_l\}$, and for $w_k$, $\mathfrak{h} w_k=
(l-2k)w_k$ and $\mathfrak{n} w_k=k w_{k-1}$, then
$u(r)w_k=\sum_{j=0}^k \binom{k}{j} r^{k-j} w_j$. If $l$ is odd,
then $V_i$ does not have contribution to $V^0(A)$, so we just consider
the case when $l=2p$ is even. Let $v_i$ be the image of $v$ under
the projection $q_i: V\rightarrow V_i$, then since $v\in V^- + V^0$
and $u(r)v \in V^{-0}(A)$, for $v_i$ we also have $v_i \in V^- +
V^0$ and $u(r)v_i \in V^{-0}(A)$. Then we can assume that
$v_i=\sum_{k=p}^{2p} c_k w_k$ ($w_p\in V^0(A)$, $w_k \in V^{-}(A) \text{ for
} k> p$). 
\par Let us calculate the $w_p$ coefficient of $u(r)v_i$.
According to the description of $V_i$ above, for $j\leq p$, the $w_j$ coefficient of
$u(r)v_i$ is $\sum_{k=p}^{2p} \binom{k}{j} r^{k-j} c_k$. Then
since $u(r)v_i \in V^{-0}(A)$, we have for $j> p$, its $w_j$
coefficient, which is $\sum_{k=p}^{2p} \binom{k}{j} r^{k-j} c_k$,
equals $0$. Thus we get a series of equations:
$$
\begin{array}{c}
\sum_{k=p}^{2p} \binom{k}{p-1} r^{k-p+1} c_k= 0 \\
   \\              \\
\sum_{k=p}^{2p}\binom{k}{p-2} r^{k-p+2} c_k= 0 \\   \\      \\
 \vdots
\\   \\    \\ \sum_{k=p}^{2p}\binom{k}{0} r^{k}  c_k =0.
\end{array}
$$
From these equations we want to find out its $w_p$ coefficient, say
$\sum_{k=p}^{2p} \binom{k}{p} r^{k-p} c_k$.
\par Define 
  $$F(x) = \sum_{k=p}^{2p} r^{k-p} c_k x^k,$$
  then from the above equations we could easily deduce that 
  $$F^{(i)}(1)= 0,$$
  for $i=0,1,2,\dots, p-1$, here $F^{(i)}(x)$ denotes the $i$th derivative of $F(x)$. Therefore, the power of factor $(x-1)$ in $F(x)$ is at least 
  $p$. Combined with the fact that the power of factor $x$ in $F(x)$ is at least $p$, this implies
  $$F(x)= C x^p (x-1)^p,$$
  for some constant $C$.  Therefore the coefficient of $w_p$ in $u(r)v_i$ is 
  $$\frac{1}{p!} F^{(p)}(1)= \frac{1}{p!} p! C = C.$$
  On the other hand, from $F(x)= C x^p(x-1)^p$, the coefficient of $x^p$ of $F(x)$, $c_p = (-1)^p C$. 
  Note that the Weyl element $\sigma = \begin{bmatrix}0 & -1 \\ 1 & 0\end{bmatrix}$ sends $c_p w_p$ to $(-1)^p c_p w_p$. 
  Therefore we get 
  $$(v_i)^0(A) = \sigma (u(r)v_i)^0(A).$$
  By taking the summation over all irreducible components $V_i$ of $V$, we get 
  $$v^0(A) = \sigma (u(r)v)^0(A).$$
  This completes the proof.
\end{proof}


\section{Conclusion}
\label{conclusion}
To conclude Theorem \ref{goal_thm}, it suffices to prove Theorem \ref{main_theorem}.
\begin{proof}[Proof of Theorem \ref{main_theorem}]
We start with the condition that there exists a nonzero vector $v \in V$ such that 
$$u(\varphi(s))v \in V^{-0}(A) \text{ for all } s \in I.$$

\par Fix $s_0 \in I$. By replacing $v$ by $ u(\varphi(s_0))v$, $u(\varphi(s))$ by $\psi(s) :=\varphi(s) - \varphi(s_0) $, we may assume that $v \in V^{-0(A)}$, $\psi(s_0) =\mathbf{0}$, and $u(\psi(s))v \in V^{-0}(A)$ for all $s \in I$.

\par  We claim that $(v)^0(A)$
is fixed by $\mathrm{SL}(2,\varphi^{(1)}(s_0))$. In fact, since $u(\varphi(s))v \in
V^{-0}(A)$ for any $s$, by taking derivative at $s_0$, we have that
$\varphi^{(1)}(s_0) u(\varphi(s_0))v = \varphi^{(1)}(s_0) v \in V^{-0}(A)$ (here $\varphi^{(1)}(s_0) \in \R^{n-1} \cong \mathfrak{n}$ acts on $V$ by the derived Lie algebra action). Therefore, $\varphi^{(1)}(s_0) (v)^0(A) = \mathbf{0}$
since otherwise it will belong to $V^{+}(A)$ (the eigenvalues of eigenvectors for $A$ are increased by $\varphi^{(1)}(s_0)$). This implies that
$(v)^0(A)$ is fixed by $\{u(r \varphi^{(1)}(s_0)): r \in \R\}$. Since by definition, $(v)^0(A)$ is
also fixed by $A$, we have that it is fixed by the whole
$\mathrm{SL}(2,\varphi^{(1)}(s_0))$. This proves the claim. In fact, the same argument shows that for any $s \in I$,
$(u(\varphi(s))v)^{0}(A)$ is fixed by $\SL(2, \varphi^{(1)}(s))$.
\par  With respect to
the stabilizer of
$(v)^0(A)$ in $N$, we make the following claim:
\begin{claim} 
The set $S =\{\mathbf{x} \in \mathbb{R}^{n-1}:
u(\mathbf{x})(v)^0(A) =(v)^0(A) \}$ is a proper subspace, in other
words, there exists some vector $\mathbf{w}\in \mathbb{R}^{n-1}$
such that if $\mathbf{x} \in S$, then $\langle \mathbf{x}, \mathbf{w}\rangle=0$,
where $\langle \cdot , \cdot\rangle$ denotes the standard inner product on $\R^{n-1}$. 
\end{claim}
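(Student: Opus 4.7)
The plan is a proof by contradiction: assume $S=\mathbb{R}^{n-1}$, derive that $v=g\gamma p_L$ is fixed by all of $H$, and then contradict the density $\overline{Hg\Gamma}=G$.

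First, $S$ is a linear subspace of $\mathbb{R}^{n-1}$. Since $u(\mathbf{x})$ acts on $V$ as $\exp(d\rho(\mathbf{x}))$ with $d\rho(\mathbf{x})$ nilpotent, the identity $u(\mathbf{x})(v_0)^0=(v_0)^0$ is equivalent to $d\rho(\mathbf{x})(v_0)^0=0$, so $S$ is the kernel of the linear map $\mathbf{x}\mapsto d\rho(\mathbf{x})(v_0)^0$.

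Now suppose $S=\mathbb{R}^{n-1}$. Then the subalgebra of $\mathfrak{so}(n,1)$ whose image under $d\rho$ annihilates $(v_0)^0$ contains $\mathfrak{n}$, $\mathfrak{a}$ (since $(v_0)^0\in V^0$), and the $e_1$-direction of $\mathfrak{n}^-$ (since $(v_0)^0$ is fixed by the chosen $\mathrm{SL}(2,\mathbb{R})$-copy). A direct bracket computation shows these generate all of $\mathfrak{so}(n,1)$: the brackets of $e_1\in\mathfrak{n}^-$ with $e_i\in\mathfrak{n}$ for $i\neq 1$ yield rotation generators in $\mathfrak{m}$, which then conjugate $e_1\in\mathfrak{n}^-$ to each $e_i\in\mathfrak{n}^-$, exhausting $\mathfrak{n}^-$; together with $\mathfrak{n}$ and $\mathfrak{a}$ this gives all of $\mathfrak{so}(n,1)$. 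Hence $(v_0)^0$ is $H$-fixed.

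Next I promote this to $v\in V^H$ via Lemma~\ref{basic_lemma} (second part). First, $(v_0)^0\neq 0$: otherwise $v_0\in V^-$, and for any $s$ near but distinct from $s_0$ with $\mathbf{y}=\varphi(s)-\varphi(s_0)\neq 0$, the condition $u(\varphi(s))v\in V^-+V^0$ from Proposition~\ref{main_proposition} reads $u(\mathbf{y})z(s_0)^{-1}v_0\in V^-+V^0$, contradicting the lemma applied to the $\mathrm{SL}(2,\mathbb{R})$-copy in direction $\mathbf{y}$. Next, writing $v_0=v_0^-+(v_0)^0$ and using that $u(\mathbf{y}), z(s_0)\in H$ fix $(v_0)^0$, the same condition reduces to $u(\mathbf{y})z(s_0)^{-1}v_0^-\in V^-+V^0$, and the lemma again forces $v_0^-=0$. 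Hence $v_0=(v_0)^0\in V^H$, and so $v=u(-\varphi(s_0))z(s_0)^{-1}v_0=v_0\in V^H$.

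Finally, $v=g\gamma p_L\in V^H$ translates to $H\subset g\gamma N_G^1(L)(g\gamma)^{-1}$, hence $Hg\Gamma\subset g\gamma N_G^1(L)\Gamma$. The main obstacle is deriving the final contradiction: one must combine the fact that $L\in\mathscr{H}$ (so that $N_G^1(L)$ has a closed orbit in $G/\Gamma$) with the density hypothesis $\overline{Hg\Gamma}=G$ and the standing assumption $L\subsetneq G$, handling the subcase $L\lhd G$ by reducing to the already-treated case $L=G$ from Section~\ref{limit measure invariant unipotent}.
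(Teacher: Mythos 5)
Your first half is essentially the paper's own argument: assuming $S=\mathbb{R}^{n-1}$ you show $(v_0)^0$ is fixed by all of $H=\mathrm{SO}(n,1)$ (you do this by a bracket-generation argument in $\mathfrak{so}(n,1)$, where the paper uses the $\mathrm{SL}(2,\mathbb{R})$-representation fact that a vector fixed by $A$ and the upper unipotent is fixed by the lower one; both are fine), and then Lemma \ref{basic_lemma}, applied to the $\mathrm{SL}(2,\mathbb{R})$-copy in the direction $\varphi(s)-\varphi(s_0)$, kills $v_0^-$, so $v=g\gamma p_L$ is $H$-fixed. (Your preliminary step that $(v_0)^0\neq 0$ is redundant but harmless, and your reduction $u(\varphi(s))v=u(\mathbf{y})z(s_0)^{-1}v_0$ is correct.)

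The genuine gap is the final step, which you explicitly defer as ``the main obstacle'': deriving a contradiction from ``$g\gamma p_L$ is $H$-fixed'' together with $L\subsetneq G$ and $\overline{Hg\Gamma}=G$. This is not a routine combination of the ingredients you list; it is where most of the work in the paper's proof of the claim lies. The paper argues: since $\gamma^{-1}g^{-1}Hg\gamma$ fixes $p_L$ and $\Gamma p_L$ is discrete (Proposition \ref{margulis_2}), the closure computation $\Gamma p_L=\overline{\Gamma g^{-1}Hg\gamma p_L}=Gp_L$ (using the density hypothesis) shows $Gp_L$ is discrete, hence $G_0$ fixes $p_L$ and $G_0\subset N^1_G(L)$; then \cite[Theorem 2.3]{Shah_2} produces a closed subgroup $H_1\subset N^1_G(L)$ containing all unipotent one-parameter subgroups of $N^1_G(L)$, with $H_1\cap\Gamma$ a lattice and $\pi(H_1)$ closed, and density forces $F=g\gamma H_1\gamma^{-1}g^{-1}=G$, hence $L\lhd G$; finally $W\subset L$, simplicity of $\mathrm{SO}(n,1)$ gives $H\subset L$, closedness of all orbits of the normal subgroup $L$ together with density of $Hx$ gives $L=G$, the desired contradiction. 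None of this is supplied by your sketch, and your two suggested shortcuts do not work as stated: the assertion that $L\in\mathscr{H}$ makes the $N^1_G(L)$-orbit of $\pi(e)$ closed is itself a nontrivial fact you would have to prove (the paper avoids it precisely by passing to $H_1$ via \cite{Shah_2}); and ``reducing the subcase $L\lhd G$ to the already-treated case $L=G$'' of section \ref{limit measure invariant unipotent} is not meaningful, since that section treats the situation where Ratner's subgroup is $G$ itself (so $\lambda=\mu_G$), whereas here you must show that a proper (possibly normal) $L$ with the stated properties cannot exist at all. Until this last step is carried out, the claim is not proved.
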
 
\par
\begin{proof}[Proof of the claim]
\par In fact, it is easy to see that $S$ is a subspace of
$\mathbb{R}^{n-1}$. We only need to show it is a proper subspace. 
\par Suppose not, i.e., $S=\mathbb{R}^{n-1}$, then
$u(\mathbf{x})(v)^0(A)=(v)^0(A)$ for all $\mathbf{x} \in
\mathbb{R}^{n-1}$. Since $(v)^0(A)$ is also fixed by $A$, we $(v)^0(A)$ is fixed by the whole $\mathrm{SO}(n,1)$.  
\par Now we consider $(v)^{-}(A) \in V^-(A)$. 
If $(v)^{-}(A) \neq \mathbf{0}$, then for all $s \in I$,
$$\begin{array}{rcl} u(\varphi(s))(v)^{-}(A) & = & u(\varphi(s) )v - u(\varphi(s) )(v)^0(A) \\
 & = & u(\varphi(s))v - (v)^0(A)  \in V^{-0}(A)\end{array}$$
 This is impossible by Lemma \ref{lemma_non_divergent}. Therefore, 
$(v)^{-}(A)=0$, i.e., $v= (v)^{0}(A) \in V^0(A)$. Then $p_L$
is fixed by the action of
$\gamma^{-1}g^{-1}\mathrm{SO}(n,1)g\gamma$ . Thus
$$
\begin{array}{rcl}
\Gamma p_L & = & \overline{\Gamma p_L} \text{ since } \Gamma p_L
 \text{ is discrete} \\
 & = & \overline{\Gamma \gamma^{-1} g^{-1}\mathrm{SO}(n,1)g\gamma
 p_L} \\ & = & \overline{\Gamma g^{-1}\mathrm{SO}(n,1)g\gamma
 p_L} \\ & = & G g\gamma p_L \text{ since } \overline{\mathrm{SO}(n,1)g
 \Gamma}=G \\
  & = & G p_L.
 \end{array}
$$
This implies $G_0 p_L = p_L$ where $G_0$ is the connected component
of $e$. In particular, $\gamma^{-1} g^{-1}\mathrm{SO}(n,1) g
\gamma\subset G_0$ and $G_0 \subset N^1_{G}(L)$. By ~\cite[Theorem
2.3]{Shah_2}, there exists a closed subgroup $H_1 \subset
N^1_{G}(L)$ containing all $\mathrm{Ad}$-unipotent one-parameter
subgroups of $G$ contained in $N^1_{G}(L)$ such that $H_1 \cap
\Gamma$ is a lattice in $H_1$ and $\pi (H_1)$ is closed. If we put
$F= g\gamma H_1 \gamma^{-1} g^{-1}$, then $\mathrm{SO}(n,1)\subset
F$ since $\mathrm{SO}(n,1)$ is generated by it unipotent
one-parameter subgroups. Moreover, $Fx =g \gamma \pi(H_1)$ is closed
and admits a finite $F$-invariant measure. Then since
$\overline{\mathrm{SO}(n,1)x}=G/\Gamma$, we have $F=G$. This implies
$H_1 = G$ and thus $L \lhd G$. Therefore $N(L,W)=G$. In particular,
$W\subset L$, and thus $L\cap \mathrm{SO}(n,1)$ is a normal subgroup
of $\mathrm{SO}(n,1)$ containing $W$. Since $\mathrm{SO}(n,1)$ is a
simple group, we have $\mathrm{SO}(n,1) \subset L$. Since $L$ is a
normal subgroup of $G$ and $\pi(L)$ is a closed orbit with finite
$L$-invariant measure, every orbit of $L$ on $G/\Gamma$ is also
closed and admits a finite $L$-invariant measure, in particular,
$Lx$ is closed. But since $\mathrm{SO}(n,1)x$ is dense in
$G/\Gamma$, $Lx$ is also dense. This shows that $L=G$, which
contradicts to our hypothesis that $\mu_{\infty} \neq \mu_G$. This proves the claim. 
\end{proof}
\par Put $\varphi(s)=r(s)k(s)\mathbf{e}_1$, where $r(s) \in \mathbb{R}$
and $k(s)\in \mathrm{SO}(n-1)$. Recall that we have $u(k \mathbf{x})= k u(\mathbf{x}) k^{-1}$
for any $k \in M \cong \mathrm{SO}(n-1)$ and $\mathbf{x} \in
\mathbb{R}^{n-1}$.
\par We rewrite $u(\varphi(s))v$ as follows:
$$
\begin{array}{rl}
u(\varphi(s))v & = u(r(s)k(s) \mathbf{e}_1)v \\
& = k(s)u(r(s)\mathbf{e}_1)k^{-1}(s)v.
\end{array}
$$
Since $k(s) \in Z(A)$ preserves the eigenspaces of $A$, we have $k^{-1}(s)v
\in V^{-0}(A)$ and $u(r(s)\mathbf{e}_1)k^{-1}(s)v$.
Define
$$E =\begin{bmatrix} -1 & & & \\ & 1 & & \\
 & & \ddots &  \\ & & & 1 \end{bmatrix}.$$  
 Applying Lemma \ref{basic_lemma} 
with $v$ replaced by $k^{-1}(s)v$, and $\mathrm{SL}(2,\R)$ replaced by $\mathrm{SL}(2,\mathbf{e}_1)$, we have $(u(r(s) \mathbf{e}_1)k^{-1}(s)v)^0(A) = \mathfrak{J}
(k^{-1}(s)v)^0(A)= \mathfrak{J} k^{-1}(s)v^0(A)$, where 
$$\mathfrak{J} = \begin{bmatrix} & & 1 \\ & E & \\ 1 &  & \end{bmatrix}$$
corresponds to $\begin{bmatrix}0 & -1 \\ 1 & 0 \end{bmatrix}$ in $\mathrm{SL}(2,\mathbf{e}_1)$.
 Thus
 $$
 \begin{array}{rl}
 (u(\varphi(s))v)^0(A) & =(k(s)u(r(s) \mathbf{e}_1)k^{-1}(s)v)^0(A) \\
 & =k(s)(u(r \mathbf{e}_1)k^{-1}(s)v)^0(A) \\ 
 & = k(s)\mathfrak{J}k^{-1}(s)v^0(A).
 \end{array}
 $$
 \par From
 our previous claim,
 $(u(\varphi(s))v)^0(A)=k(s)\mathfrak{J}k^{-1}(s)v^0(A)$ is fixed by $\mathrm{SL}(2, \varphi^{(1)}(s))$. Let $h(s) := k(s)\mathfrak{J}k^{-1}(s)$, then we have that $v^0(A)$ is fixed by $h^{-1}(s)
 \mathrm{SL}(2,\varphi^{(1)}(s))h(s)$. In particular, it is fixed by $h^{-1}(s)u^-(\varphi^{(1)}(s))
 h(s)$. By direct calculation, 
 $$
 \begin{array}{rl}h^{-1}(s)u^-(\varphi^{(1)}(s))h(s) &= k(s)\mathfrak{J}k^{-1}(s)u^-(\varphi^{(1)}(s))k(s)\mathfrak{J}
 k^{-1}(s) \\ & = u(k(s)Ek^{-1}(s)\varphi^{(1)}(s)).\end{array}
 $$
 \par
 By our previous claim, we have that $\langle k(s)Ek^{-1}(s)\varphi^{(1)}(s),
 \mathbf{w} \rangle=0$.
 \par From $\varphi(s)=r(s)k(s)\mathbf{e}_1$, we have
$\varphi^{(1)}(s)
 =r^{(1)}(s)k(s)\mathbf{e}_1+r(s)k^{(1)}(s)\mathbf{e}_1$,
 so
 $$
 \begin{array}{rl}
 k(s)Ek^{-1}(s)\varphi^{(1)}(s) & = k(s)Ek^{-1}(s) (r^{(1)}(s)k(s)\mathbf{e}_1+r(s) k^{(1)}(s)\mathbf{e}_1)
 \\  & = k(s)Ek^{-1}(s) r^{(1)}(s)k(s)\mathbf{e}_1+ k(s)Ek^{-1}(s)r(s)k^{(1)}(s)\mathbf{e}_1 \\
  & = r^{(1)}(s)k(s)E \mathbf{e}_1 + r(s)k(s)E k^{-1}(s) k^{(1)}(s)\mathbf{e}_1 \\
  & = -r^{(1)}(s)k(s)\mathbf{e}_1 + r(s)k(s)Ek^{-1}(s)k^{(1)}(s)\mathbf{e}_1 \end{array}
  $$
  \par 
  For $r(s)k(s)E k^{-1}(s)k^{-1} k^{(1)}(s)\mathbf{e}_1$, we put $k(s)=[a_1,\dots ,
  a_{n-1}]$, where $a_i \in \mathbb{R}^{n-1}$ are column vectors. It is clear that $k^{-1}(s)=k(s)^t$
  since $k(s)\in \mathrm{SO}(n-1)$. Then $k^{(1)}(s)=[a^{(1)}_1,\dots ,
  a^{(1)}_{n-1}]$, so $k^{(1)}(s)\mathbf{e}_1=a^{(1)}_1$. Then, the first
  coordinate of $k^{-1}(s)k^{(1)}(s)\mathbf{e}_1$ is equal to $a_1^t a^{(1)}_1=\langle a_1,
  a^{(1)}_1\rangle$. Given $k(s)\in \mathrm{SO}(n-1)$, we have $\langle a_1, a_1\rangle=
  1$, by taking derivative, we get $\langle a_1, a^{(1)}_1\rangle=0$, i.e., the
  first coordinate of $k^{-1}(s)k^{(1)}(s)\mathbf{e}_1$ is zero. It follows
  that $k^{-1}(s)k^{(1)}(s)\mathbf{e}_1$ is fixed by $E$, thus $k(s)Ek^{-1}(s)k^{(1)}(s)\mathbf{e}_1
  =k(s)k^{-1}(s)k^{(1)}(s)\mathbf{e}_1=k^{(1)}(s)\mathbf{e}_1$. Combining them, we
  have
  $$\begin{array}{rl}k(s)Ek^{-1}(s)\varphi^{(1)}(s) &= -r^{(1)}(s)k(s)\mathbf{e}_1+r(s) k^{(1)}(s)\mathbf{e}_1 \\
    & = \varphi^{(1)}(s)-2 r^{(1)}(s)k(s)\mathbf{e}_1 \\
    & = \varphi^{(1)}(s)- 2r^{(1)}(s)\varphi(s)/r(s) .
    \end{array}
    $$
    Therefore, $\langle \varphi^{(1)}(s)- 2 r^{(1)}(s) \varphi(s)/r(s),\mathbf{w}\rangle=0$
    for any $s\in I$. This means $\frac{\dd}{\dd s }
    (\frac{\langle \varphi(s),\mathbf{w}\rangle}{r^2(s)})=0$, which implies
    $\frac{\langle \varphi(s),\mathbf{w}\rangle}{r^2(s)}=C$ for a constant $C$. For $C=0$, this
    gives an equation of hyperplane, for $C \neq 0$, this gives an
    equation for a subsphere.
    \par This completes the proof.
\end{proof}

\bibliography{reference}{}
\bibliographystyle{plain}
\end{document}